\newcommand{\C}{{\mathbb  C}}
\newcommand{\D}{{\mathbb D}}
\newcommand{\R}{{\mathbb  R}}
\newcommand{\T}{{\mathbb  T}}
\newcommand{\A}{{\mathcal A}}
\newcommand{\cL}{{\mathcal L}}
\newcommand{\cO}{{\mathcal O}}
\newcommand{\boldcdot}{{\mathbf \cdot}}
\newcommand{\USC}{{\operatorname{{\mathcal{USC}}}}}
\newcommand{\PSH}{{\operatorname{{\mathcal{PSH}}}}}
\newtheorem{theorem}{Theorem}%[section]
\newtheorem{proposition}[theorem]{Proposition}
\theoremstyle{definition}
\begin{document}
\title[Weighted homogeneous Siciak-Zaharyuta extremal functions]
{A note on weighted homogeneous Siciak-Zaharyuta extremal functions}
%{A note on disc envelope formulas for 
%weighted homogeneous Siciak-Zaharyuta extremal functions}

\author[B. Drinovec Drnov\v sek]{Barbara Drinovec Drnov\v sek}
\address{Faculty of Mathematics and Physics\\ University of Ljubljana\\
Institute of Mathematics, Physics and Mechanics\\
Jadranska 19, 1000 Ljubljana, Slovenia}
\email{barbara.drinovec@fmf.uni-lj.si}
\thanks{B. Drinovec Drnov\v{s}ek was partially supported by grant P1-0291, Republic of Slovenia.}

\author[R. Sigursdsson]{Ragnar Sigurdsson}
\address{Department of Mathematics, School of Engineering and Natural Sciences\\
University of Iceland\\
IS-107 Reykjav\'ik, Iceland}
\email{ragnar@hi.is}

\date{\today}

\begin{abstract}
%\noindent 
We prove that for any given upper semicontinuous function 
$\varphi$ on an open subset $E$ of $\C^n\setminus\{0\}$,
such that  the complex cone generated by $E$ minus 
the origin is connected, the homogeneous Siciak-Zaharyuta 
function with the weight $\varphi$ on $E$, can be represented as an
envelope of a disc functional. 
\end{abstract}
%\medskip\par
%\noindent{\em Subject Classification (2010)}: Primary 
%32U35. Secondary 32U15,  32U05, 

%\smallskip
%\noindent{\em Key words and phrases}: 
%Siciak's homogeneous extremal function, 
%envelope of a disc functional

\subjclass[2010]{Primary 32U35; Secondary 32U15,  32U05}

\keywords{Siciak's homogeneous extremal function, envelope of a disc functional}

\maketitle

%\bigskip
{\bf Introduction.}
Let  $\cL$ denote the Lelong class on $\C^n$ and 
$\cL^h$ the subclass of functions  
$u$ which are {\it logarithmically homogeneous}.
Let $\varphi\colon E\to \overline \R$ be a function on 
a subset $E$ of $\C^n$ taking values in the
extended real line $\overline \R$.
The {\it Siciak-Zaharyuta extremal
function $V_{E,\varphi}$ with weight $\varphi$} 
is defined by 
$$
V_{E,\varphi}=\sup \{u \in \cL\,;\, u|E \leq \varphi\}.
$$
The {\it homogeneous Siciak-Zaharyuta extremal
function $V_{E,\varphi}^h$ with weight $\varphi$}  is defined
similarly with $\cL^h$ in the role of $\cL$. 
In the special case when $\varphi=0$ we only write
$V_E$ (and $V_E^h$) and we call this function the 
({\it homogeneous}) {\it Siciak-Zaharyuta extremal function for the set $E$}.
The function $V_E$ ($V_E^h$) is also called the
({\it homogeneous}) {\it pluricomplex Green function for $E$ with pole
  at infinity}.

\begin{theorem}
  \label{th:main}
Let $\varphi\colon E\to \R\cup\{-\infty\}$ be an upper semicontinuous
function on an open subset $E$  of $\C^n\setminus\{0\}$. 
Assume that there exists a function in $\cL^h$ dominated by
$\varphi$ on $E$.  Then the 
largest logarithmically homogeneous function 
$\C E\to \R\cup\{-\infty\}$
dominated by $\varphi$ on $E$ is upper semicontinuous on $\C^* E$ and it
is of the form $\log\varrho_{E,\varphi}$, where
\begin{equation}
  \label{eq:1.0}
\varrho_{E,\varphi}(z)=\inf\{|\lambda|e^{\varphi(z/\lambda)} \,;\, 
\lambda\in \C^*, \ z/\lambda \in E\}, 
\qquad z\in \C^* E.
  \end{equation}
If  $\C^*E$ is connected, then for every $z\in \C^n$
\begin{align}
 \nonumber
  V_{E,\varphi}^h(z)
=\inf\Big\{  
\int_\T \log\varrho_{E,\varphi}(f_1,\dots,f_n) \, d\sigma \,;\, 
f\in \cO(\overline \D,\mathbb  P ^n),\,
   f=[f_0:\cdots:f_n],\\
f(\T)\subset \C^*E , \, 
f_0(0)=1, \, (f_1(0),\dots,f_n(0))=z. \Big\} \label{eq:1.1}
\end{align}
If  $\C E=\C^n$, then for every $z\in \C^n$
\begin{equation}
  \label{eq:1.2}
  V_{E,\varphi}^h(z)
=\inf\Big\{  
\int_\T \log\varrho_{E,\varphi}\circ f \, d\sigma \,;\, 
f\in \cO(\overline \D,\mathbb  C ^n),\,f(0)=z\Big\}.
\end{equation}
\end{theorem}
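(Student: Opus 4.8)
I would treat the three assertions in turn; the content is concentrated in the disc formula \eqref{eq:1.1}, the rest being elementary or a formal consequence. \emph{The formula for $\varrho_{E,\varphi}$.} Logarithmic homogeneity of $\log\varrho_{E,\varphi}$ on $\C^*E$ follows from \eqref{eq:1.0} by the substitution $\lambda\mapsto\mu\lambda$, which gives $\varrho_{E,\varphi}(\mu z)=|\mu|\,\varrho_{E,\varphi}(z)$; the choice $\lambda=1$ shows $\log\varrho_{E,\varphi}\le\varphi$ on $E$; and if $u$ is logarithmically homogeneous on $\C E$ with $u\le\varphi$ on $E$, then for $z=\lambda w$ with $w\in E$ one has $u(z)=\log|\lambda|+u(w)\le\log|\lambda|+\varphi(z/\lambda)$, so $u\le\log\varrho_{E,\varphi}$ after taking the infimum over such representations. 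Hence $\log\varrho_{E,\varphi}$ is the largest logarithmically homogeneous function dominated by $\varphi$ on $E$. For upper semicontinuity on $\C^*E$ I would use that the scaling map $\pi\colon\C^*\times E\to\C^*E$, $\pi(\lambda,z)=\lambda z$, is a holomorphic submersion, hence an open map: since $(\lambda,z)\mapsto\log|\lambda|+\varphi(z)$ is upper semicontinuous on $\C^*\times E$, for every $c$ the set on which it is $<c$ is open, and its image under $\pi$, which is precisely $\{z\in\C^*E\colon\log\varrho_{E,\varphi}(z)<c\}$, is then open. The hypothesis that some function in $\cL^h$ is dominated by $\varphi$ on $E$ is used only to avoid degeneracy: by the maximality just proved that function is $\le\log\varrho_{E,\varphi}$, so $\varrho_{E,\varphi}>0$ and $V_{E,\varphi}^h$ and the envelopes below are not identically $-\infty$.

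\emph{The easy inequality in \eqref{eq:1.1}.} Let $E_\varphi$ denote the right-hand side of \eqref{eq:1.1}. Given $u\in\cL^h$ with $u\le\varphi$ on $E$ (hence $u\le\log\varrho_{E,\varphi}$ on $\C^*E$ by the first part) and a competing disc $f=[f_0:\cdots:f_n]$, the tuple $(f_1,\dots,f_n)\colon\overline\D\to\C^n$ is holomorphic, so $\zeta\mapsto u(f_1(\zeta),\dots,f_n(\zeta))$ is subharmonic on $\overline\D$; for $\zeta\in\T$ we have $f(\zeta)\in\C^*E$, so $f_0(\zeta)\neq0$ and, as $\C^*E$ is a cone, also $(f_1(\zeta),\dots,f_n(\zeta))\in\C^*E$, where $u(f_1,\dots,f_n)\le\log\varrho_{E,\varphi}(f_1,\dots,f_n)$. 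The sub-mean value inequality at $0$ then gives $u(z)\le\int_\T u(f_1,\dots,f_n)\,d\sigma\le\int_\T\log\varrho_{E,\varphi}(f_1,\dots,f_n)\,d\sigma$; taking the infimum over $f$ and the supremum over $u$ yields $V_{E,\varphi}^h\le E_\varphi$ on $\C^n$.

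\emph{The hard inequality in \eqref{eq:1.1}.} The constant disc $f\equiv[1:z]$, legitimate for $z\in\C^*E$, shows $E_\varphi\le\log\varrho_{E,\varphi}$ on $\C^*E$, hence $E_\varphi\le\varphi$ on $E$, and, $E$ being open and $\varphi$ upper semicontinuous, also $E_\varphi^*\le\varphi$ on $E$ for the upper semicontinuous regularization $E_\varphi^*$; moreover competing discs exist for every $z\in\C^n$, since any nonempty open subset of $\mathbb P^n$ contains the boundary of an analytic disc through a prescribed point. Replacing a disc $[f_0:\cdots:f_n]$ competing for $z$ by $[f_0:\mu f_1:\cdots:\mu f_n]$, which competes for $\mu z$, and using logarithmic homogeneity of $\log\varrho_{E,\varphi}$, one gets $E_\varphi(\mu z)=\log|\mu|+E_\varphi(z)$, so $E_\varphi$ and $E_\varphi^*$ are logarithmically homogeneous. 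The essential point is that $E_\varphi^*$ is plurisubharmonic; I would deduce this from the disc formula for the plurisubharmonic envelope on a complex manifold — Poletsky's disc-formula theory in its manifold form — applied on $X=\mathbb P^n$ to analytic discs whose boundary circle is required to lie in the open subset $\C^*E$ of $\C^n\subset\mathbb P^n$. Connectedness of $\C^*E$ enters precisely here: the boundary of an analytic disc is connected and so meets only one component of $\C^*E$, and connectedness ensures that the disc envelope is the true envelope of plurisubharmonic minorants of the restriction of $\log\varrho_{E,\varphi}$ to $\C^*E$, rather than a larger, ``componentwise'' object; one works in $\mathbb P^n$ instead of $\C^n$ because a zero of $f_0$ in $\D$ lets the affine disc $(f_1/f_0,\dots,f_n/f_0)$ acquire a pole, which is what makes it possible to keep the boundary in $\C^*E$ while fixing the centre when $\C E\neq\C^n$. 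Granting plurisubharmonicity, $E_\varphi^*$ is locally bounded above, in particular on the unit sphere, so logarithmic homogeneity puts $E_\varphi^*$ in $\cL^h$; together with $E_\varphi^*\le\varphi$ on $E$ this gives $E_\varphi^*\le V_{E,\varphi}^h$ by definition of the latter, and with the easy inequality we conclude $V_{E,\varphi}^h=E_\varphi=E_\varphi^*$, which is \eqref{eq:1.1}.

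\emph{Passage to \eqref{eq:1.2}, and the main obstacle.} When $\C E=\C^n$ we have $\C^*E=\C^n\setminus\{0\}$, which is connected, so \eqref{eq:1.1} applies. A disc $f=[f_0:\cdots:f_n]$ competing in \eqref{eq:1.1} produces the holomorphic disc $(f_1,\dots,f_n)\colon\overline\D\to\C^n$ with value $z$ at $0$ and sending $\T$ into $\C^n\setminus\{0\}$, with the same integral, so competing in \eqref{eq:1.2}; conversely a disc $g$ competing in \eqref{eq:1.2} — which, after the harmless reparametrisation $g(\rho\,\cdot)$ with $\rho<1$ close to $1$, we may assume does not vanish on $\T$ — gives $[1:g_1:\cdots:g_n]$, a disc competing in \eqref{eq:1.1} whose integral does not exceed that of $g$ in the limit, by upper semicontinuity. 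Hence the two infima coincide and \eqref{eq:1.2} follows. The only genuinely substantial step in all of this is the plurisubharmonicity of $E_\varphi^*$ — the Poletsky-type gluing of analytic discs in $\mathbb P^n$ with boundaries in $\C^*E$ into a single such disc with controlled functional, together with the verification that, under connectedness of $\C^*E$, this envelope equals $V_{E,\varphi}^h$.
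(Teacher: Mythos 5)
Your overall architecture is sound and several pieces are correct: the identification of $\log\varrho_{E,\varphi}$ as the largest logarithmically homogeneous minorant and its upper semicontinuity on $\C^*E$, the easy inequality via the sub-mean value property of $u(f_1,\dots,f_n)$ for $u\in\cL^h$ (using that $(f_1,\dots,f_n)(\T)\subset\C^*E$ because the cone is $\C^*$-invariant and $f_0\neq0$ on $\T$), and the derivation of (\ref{eq:1.2}) from (\ref{eq:1.1}) by comparing competitor classes is a nice shortcut that differs from the paper, which instead goes through Proposition~\ref{prop:2.1}~(iv) and Poletsky's theorem on $\C^n$.

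The genuine gap is exactly at the point you yourself call essential: the plurisubharmonicity of $E_\varphi^*$. You propose to deduce it from ``Poletsky's disc-formula theory in its manifold form'' applied on $\mathbb P^n$ to discs whose boundary circles lie in $\C^*E$. But the Poletsky theorem on manifolds (Poletsky, L\'arusson--Sigurdsson, Rosay) concerns the Poisson functional $f\mapsto\int_\T\psi\circ f\,d\sigma$ of a weight $\psi$ that is upper semicontinuous on the \emph{whole} manifold, with \emph{unconstrained} closed discs through the centre; it does not cover envelopes over discs whose boundaries are confined to an open subset while the centre may lie outside it and the weight is defined only on that subset. Extending $\log\varrho_{E,\varphi}$ by $+\infty$ off $\C^*E$ destroys upper semicontinuity, so the theorem cannot be applied that way; and written in the affine chart your functional is not a pure Poisson functional but the Riesz--Poisson functional $-\sum_{a\in f^{-1}(H_\infty)}m_{f_0}(a)\log|a|+\int_\T\tilde\varphi(f_1/f_0,\dots,f_n/f_0)\,d\sigma$. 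The statement you actually need --- that this boundary-constrained envelope is plurisubharmonic and equals the extremal function, with connectedness of $\C^*E$ as the hypothesis that makes the gluing of discs possible --- is precisely the theorem of Magn\'usson--Sigurdsson (formula (\ref{eq:2.3}), building on L\'arusson--Sigurdsson), which is the external result the paper reduces to after establishing Proposition~\ref{prop:2.1}~(iii) ($V^h_{E,\varphi}=V_{\C^*E,\tilde\varphi}$) and simplifying the functional by logarithmic homogeneity and the Riesz representation of $\log|f_0|$. So either cite that theorem explicitly (in which case your proof becomes essentially the paper's, minus the reduction $V^h_{E,\varphi}=V_{\C^*E,\tilde\varphi}$ that your direct scheme indeed avoids), or supply the substantial Poletsky-type gluing argument for boundary-constrained discs crossing $H_\infty$; as written, the central step is unproved.
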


\medskip
{\it A disc envelope formula} is  a formula where
the values of a function $F$ defined on a complex
space $X$ with values on the extended real line $\overline \R$
are given as $F(z)=\inf\{H(f)\,;\, f\in {\mathcal B}(z)\}$, where
$H$ is {\it disc functional}, i.e., a function  
defined on some subset $\A$ of $\cO(\D,X)$, the set of 
{\it analytic
discs} in $X$, with values on $\overline \R$,
$\mathcal B$ is a subclass of $\A$, and
${\mathcal B}(z)$ consists of all of $f\in \mathcal B$ 
with {\it center} $z=f(0)$.

\smallskip
The formula (\ref{eq:1.1}) is an example of a disc envelope 
formula, where 
$\A$ consists of all closed analytic discs with value in 
the projective space, i.e., elements 
$f$ in $\cO(\overline \D,\mathbb  P ^n)$ which map the
unit circle $\T$ into $\C^*E$,
 $H(f)$ is the integral, and
${\mathcal B}$ is the subset of $\A$ consisting
of discs with $f_0(0)=1$.  We identify a point
$[1:z]\in \mathbb  P ^n$ with the point $z\in \C^n$.

\smallskip

For general information on the Siciak-Zaharyuta extremal function 
see 
Siciak \cite{Sic:1961,Sic:1962,Sic:1981,Sic:1982,Sic:2011}
and 
Zaharyuta \cite{Zah:1976}.
The first disc envelope formula for $V_E$ was proved by Lempert in the
case when $E$ is an open convex subset of $\C^n$ with real analytic
boundary.  (The proof is given in Momm \cite[Appendix]{Mom:1996}.)  
L\'arusson and Sigurdsson \cite{LarSig:2005} 
proved disc envelope formulas for $V_E$ for open connected
subsets $E$ of $\C^n$.  Magn\'usson and Sigurdsson 
\cite{MagSig:2007} generalized this result and obtained a disc formula 
for $V_{E,\varphi}$ in the case when $\varphi$ is an upper semicontinuous
function on an open connected subset $E$ of $\C^n$.
Drinovec Drnov\v{s}ek and Forstneri\v{c} \cite{DrnFor:2011a} proved disc envelope formulas for $V_E$ for open subsets $E$ of an irreducible and locally irreducible algebraic subvariety of $\C^n$.
Recently, Magn\'usson \cite{Mag:2013} established disc envelope formulas for the global extremal 
function in projective space.

\medskip
%Baran \cite{Bar:1998,Bar:1999,Bar:2009}

%Drinovec Drnov{\v{s}}ek and Forstneri{\v{c}}
%\cite{DrnFor:2012,DrnFor:2011}

%L\'arusson and Sigurdsson 
%\cite{LarSig:2007, LarSig:2009}

%Magn\'usson and Sigurdsson \cite{MagSig:2007}

%Momm \cite{Mom:1996}

\bigskip
{\bf Acknowledgement.}  This paper was written 
while the second author was visiting University of Ljubljana in the
autumn of 2012.
He would like to thank the members of the Department of Mathematics
for their great hospitality and for many interesting and 
helpful discussions.

\bigskip
{\bf Notation.}
Let $\D$ denote the unit disc in $\C$, $\T$ the unit
circle, and $\sigma$ the arc length measure on $\T$ normalized to $1$.  
An analytic disc is a holomorphic map $f\colon \D\to X$, where 
$X$ is some complex space.  We let $\cO(\D,X)$ denote the set of all 
analytic discs.  We say that the disc is closed if it extends 
as a holomorphic map to some neighbourhood of the closed
unit disc $\overline \D$ with values in $X$ and we let 
$\cO(\overline \D,X)$ denote the set of all closed analytic discs in 
$X$.  The point $z=f(0)\in X$ is called the  center of $f$.

\smallskip
For a subset $X$ of $\C^n$ we let 
$\USC(X)$ denote the set of all upper semicontinuous functions on $X$,
and for open subset $U$ of $\C^n$ we  denote by $\PSH(U)$ the set of all plurisubharmonic functions on $U$.
The Lelong class $\cL$ consists of all $u\in \PSH(\C^n)$ such that
$u-\log^+|\boldcdot|$ is bounded above and 
$\cL^h$ is the subclass of all logarithmically homogeneous functions,
i.e., functions satisfying
$u(\lambda z)=u(z)+\log|\lambda|$ for 
$\lambda\in \C^*$ and $z\in \C^n$.
%For brevity we call functions homogeneous if they
%are defined in a complex conic subset of $\C^n$
%and satisfy this condition.  
Observe that every such function takes the value
$-\infty$ at the origin.  
For every subset $E$ of $\C^n$ we set
$\C E=\{\lambda z\,;\, \lambda\in \C, z\in E\}$,
$\C^* E=\{\lambda z\,;\, \lambda\in \C^*, z\in E\}$
and we call $\C E$ the complex cone generated by $E$.
Note that complex cones are suitable sets for the domains of definition 
of logarithmically homogeneous functions.

\smallskip
Let $\mathbb  P ^n$ denote the $n$-dimensional projective space,
$\pi\colon \C^{n+1}\setminus\{0\}\to \mathbb  P ^n$ the natural projection,
$(z_0,\dots,z_n)\mapsto [z_0:\cdots:z_n]$, and identify 
$\C^n$ with the subset of all $[z_0:\cdots:z_n]$ with $z_0\neq 0$ and, in
particular, the point $z\in \C^n$ with $[1:z]\in \mathbb  P ^n$.  
{\it The hyperplane at infinity} is
$H_\infty=\pi(Z_0\setminus \{0\}\big)$, where $Z_0$ is the hyperplane
in $\C^{n+1}$ defined by the equation $z_0=0$.
Then $\mathbb  P ^n=\C^n\cup H_\infty$.

\bigskip
{\bf Review of a few results.}
Assume that $\psi\colon X\to \R\cup\{-\infty\}$ is a measurable function
on a subset $X$ of $\C^n$, such that there is 
$u\in \cL$ satisfying $u|X\leq \psi$.  
It is an easy observation that a function $u\in \PSH(\C^n)$ is
in $\cL$ if and only if the function 
\begin{equation*}
  %\label{eq:2.1}
(z_0,\dots,z_n)\mapsto 
u(z_1/z_0,\dots,z_n/z_0)+\log|z_0|
\end{equation*}
extends as a plurisubharmonic function from $\C^{n+1}\setminus Z_0$ to
$\C^{n+1}\setminus \{0\}$.  Let $v$ denote this extension.
Take $f=[f_0:\cdots:f_n]\in \cO(\overline \D,\mathbb  P ^n)$ with
$f_0(0)=1$, $(f_1(0),\dots,f_n(0))=z$, satisfying $f(\T)\subset X$,  and define
$\tilde f=(f_0,\dots,f_n)\in \cO(\overline \D,\C^{n+1}\setminus\{0\})$.
Then the subaverage 
property of $v\circ \tilde f$ and the Riesz representation formula applied to
$\log|f_0|$ give (see \cite[p.~243]{MagSig:2007})
\begin{align*}
 u(z)&=u(f_1(0),\dots,f_n(0))+\log|f_0(0)|
=v\circ \tilde f(0) \\
&\leq \int_\T u(f_1/f_0,\dots,f_n/f_0)\, d\sigma+\int_\T \log|f_0|\, d\sigma\\
&\leq \int_\T \psi(f_1/f_0,\dots,f_n/f_0)\, d\sigma-\sum_{a\in f^{-1}(H_\infty)}m_{f_0}(a) \log|a|
.
\end{align*}
 
%Observe that the first integral is interpreted as
%$$
%\int_\T u\circ f\, d\sigma
%=\int_\T u(f_1/f_0,\dots,f_n/f_0)\, d\sigma$$
%and the last one similarly.
For an  open connected $X\subset \C^n$ and $\psi\in \USC(X)$,
Magn{\'u}sson and Sigurdsson
 \cite[Theorem~2]{MagSig:2007} proved that
for every $z\in \C^n$  
\begin{align}
 \nonumber
  V_{X,\psi}(z)=\inf\big\{
-\sum_{a\in f^{-1}(H_\infty)}m_{f_0}(a) \log|a|
+\int_\T \psi(f_1/f_0,\dots,f_n/f_0)\, d\sigma \,;\, \\
f\in \cO(\overline \D,\mathbb  P ^n),\ f(\T)\subset X, \ f_0(0)=1, \ 
(f_1(0),\dots,f_n(0))=z  \label{eq:2.3}
\big\}.
\end{align}

\bigskip
Our main result, Theorem~\ref{th:main}, will follow from this formula
 and the following

\begin{proposition}   \label{prop:2.1}
Let $\varphi\colon E\to \R\cup\{-\infty\}$ be a function
on a subset $E \subset \C^n\setminus\{0\}$
such that there exists $u\in \cL^h$ satisfying
$u|E\leq \varphi$.  Let $\tilde \varphi\colon \C E\to \R\cup\{-\infty\}$
be the supremum of all  logarithmically homogeneous functions
on $\C E$  dominated by $\varphi$ on $E$. 
Then the following hold:
  \begin{enumerate}
  	\item[(i)] 
  	$\tilde \varphi$ is logarithmically homogeneous on $\C E$ and
  	for every $z\in \C^* E$ 
		\begin{equation}	\label{eq:prop(i)}
			\tilde \varphi(z)=\inf \{ \varphi(\lambda z)-\log |\lambda|
			\,;\, \lambda\in \C^* \text{ and } \lambda z \in E \},
			 %\text{ for every }z\in \C^* E,
		\end{equation}	 
  	\item[(ii)] 
			$V_{E,\varphi}^h=V_{E,\tilde\varphi}^h
			=V_{\C^* E,\tilde \varphi}^h$.
  \end{enumerate}
  If, in addition to the above, $\C^* E$ is nonpluripolar and $\varphi\in \USC(E)$ then 
  \begin{enumerate}
    \item[(iii)] 
				$\tilde \varphi\in \USC(\C^* E)$ and $V_{E,\varphi}^h=V_{\C^* E,\tilde \varphi}$,
		\item[(iv)] 
		  if $\C E=\C^n$, then 	$\tilde \varphi\in \USC(\C^n)$ and 
		  $$V_{E,\varphi}^h=\sup \{u \in \PSH(\C^n)\,;\, u \leq \tilde\varphi\}.$$
  \end{enumerate}
\end{proposition}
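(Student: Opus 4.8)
For (i), the plan is to write down the candidate explicitly: set $\varrho(z):=\inf\{\varphi(\lambda z)-\log|\lambda|\,;\,\lambda\in\C^*,\ \lambda z\in E\}$ on $\C^*E$ and $\varrho(0):=-\infty$ (this is $\log\varrho_{E,\varphi}$ after the substitution $\lambda=1/\mu$). The substitution $\lambda\mapsto\mu\lambda$ shows $\varrho(\mu z)=\varrho(z)+\log|\mu|$, so $\varrho$ is logarithmically homogeneous; the choice $\lambda=1$ gives $\varrho\le\varphi$ on $E$; and since the defining infimum runs over a nonempty set of elements of $\R\cup\{-\infty\}$, $\varrho$ never takes the value $+\infty$. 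For maximality: if $g$ is logarithmically homogeneous on $\C E$ with $g|E\le\varphi$, then $g(z)=g(\lambda z)-\log|\lambda|\le\varphi(\lambda z)-\log|\lambda|$ whenever $\lambda z\in E$, so $g\le\varrho$; hence $\varrho=\tilde\varphi$, which is (i). Part (ii) is then formal: from $\tilde\varphi\le\varphi$ on $E$ and $E\subseteq\C^*E$ one reads off $V_{\C^*E,\tilde\varphi}^h\le V_{E,\tilde\varphi}^h\le V_{E,\varphi}^h$, while for $u\in\cL^h$ with $u|E\le\varphi$ the maximality in (i) gives $u\le\tilde\varphi$ on all of $\C E$, hence $u|_{\C^*E}\le\tilde\varphi$ and $u\le V_{\C^*E,\tilde\varphi}^h$; taking the supremum over such $u$ closes the cycle of inequalities.

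For (iii) I would add the standing assumptions that $E$ --- hence $\C^*E=\bigcup_{\lambda\in\C^*}\lambda^{-1}E$ --- is open, that $\C^*E$ is nonpluripolar, and that $\varphi\in\USC(E)$. Step one: $\tilde\varphi\in\USC(\C^*E)$. Given $z_0\in\C^*E$ and $\varepsilon>0$, choose $\lambda_0$ with $\lambda_0z_0\in E$ and $\varphi(\lambda_0z_0)-\log|\lambda_0|<\tilde\varphi(z_0)+\varepsilon$, then use upper semicontinuity of $\varphi$ and openness of $E$ to get an open $W\subset E$ around $\lambda_0z_0$ with $\varphi<\varphi(\lambda_0z_0)+\varepsilon$ on $W$; on the neighbourhood $\lambda_0^{-1}W$ of $z_0$ one has $\tilde\varphi<\tilde\varphi(z_0)+2\varepsilon$. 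Step two: $V:=V_{\C^*E,\tilde\varphi}$ is logarithmically homogeneous. The family $\mathcal F:=\{u\in\cL\,;\,u|_{\C^*E}\le\tilde\varphi\}$ is nonempty by (ii), and for each $\mu\in\C^*$ the map $u\mapsto u_\mu$, $u_\mu(z):=u(\mu z)-\log|\mu|$, sends $\mathcal F$ into $\mathcal F$: $u_\mu\in\cL$, and since $\C^*E$ is $\C^*$-invariant and $\tilde\varphi$ is logarithmically homogeneous, $u(\mu z)\le\tilde\varphi(\mu z)=\tilde\varphi(z)+\log|\mu|$ for $z\in\C^*E$. As $u\mapsto u_{1/\mu}$ is its inverse, this map permutes $\mathcal F$, so $V(\mu z)-\log|\mu|=\sup_{u\in\mathcal F}u_\mu(z)=\sup_{v\in\mathcal F}v(z)=V(z)$. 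Step three: pass to the upper regularisation. $V^*$ inherits logarithmic homogeneity, and $V^*\in\cL$ because $\C^*E$ contains a nonpluripolar compact $K$ with $\sup_K\tilde\varphi=:M<\infty$, which forces $u\le M+V_K^*$ for every $u\in\mathcal F$ (here $V_K^*\in\cL$ since $K$ is nonpluripolar), hence $V\le M+V_K^*$; thus $V^*\in\cL^h$. Step four: $V\le\tilde\varphi$ on $\C^*E$ and $\tilde\varphi\in\USC(\C^*E)$ give $V^*(z)=\limsup_{w\to z}V(w)\le\tilde\varphi(z)$ for $z\in\C^*E$, so $V^*|E\le\varphi$ and $V^*\le V_{E,\varphi}^h$. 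Together with (ii) this yields
\[
V_{E,\varphi}^h=V_{\C^*E,\tilde\varphi}^h\le V_{\C^*E,\tilde\varphi}=V\le V^*\le V_{E,\varphi}^h,
\]
forcing equality throughout, which is (iii).

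For (iv), note $\C E=\C^n$ gives $\C^*E=\C^n\setminus\{0\}$, which is nonpluripolar, so (iii) applies. Logarithmic homogeneity of $\tilde\varphi$ together with the bound $M:=\sup_{|\omega|=1}\tilde\varphi(\omega)<\infty$ (which holds because $\tilde\varphi$ is upper semicontinuous on the compact sphere) give $\tilde\varphi\le M+\log^+|\boldcdot|$ on $\C^n$ and, in particular, $\tilde\varphi(z)\to-\infty$ as $z\to0$; hence $\tilde\varphi\in\USC(\C^n)$ with $\tilde\varphi(0)=-\infty$. Any $u\in\PSH(\C^n)$ with $u\le\tilde\varphi$ then lies in $\cL$ by this growth bound, and the sub-mean value inequality over spheres $|z|=r\to0$ forces $u(0)=-\infty$; so the class of such $u$ coincides with $\{u\in\cL\,;\,u\le\tilde\varphi\ \text{on}\ \C^*E\}$, whence $\sup\{u\in\PSH(\C^n)\,;\,u\le\tilde\varphi\}=V_{\C^*E,\tilde\varphi}$, which equals $V_{E,\varphi}^h$ by (iii).

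The part requiring the real idea is (iii): identifying that the weighted extremal function $V_{\C^*E,\tilde\varphi}$ over the cone is automatically logarithmically homogeneous --- via the observation that $u\mapsto u(\mu\,\boldcdot\,)-\log|\mu|$ permutes the competing family --- and then checking that its upper semicontinuous regularisation still stays below $\tilde\varphi$ throughout $\C^*E$. This is exactly where openness of $\C^*E$ and upper semicontinuity of $\tilde\varphi$ enter, together with nonpluripolarity of $\C^*E$, which is needed to keep $V^*$ in the Lelong class.
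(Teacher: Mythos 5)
Your proposal is correct and follows essentially the same route as the paper: the explicit infimum formula and its maximality for (i), the formal chain of inequalities for (ii), upper semicontinuity of $\tilde\varphi$ via openness of $E$ plus the regularized, homogenized envelope staying in $\cL$ thanks to nonpluripolarity for (iii), and the logarithmic growth bound from the sphere for (iv). The only (harmless) variation is in (iii): the paper symmetrizes each individual competitor $u\in\cL$ by taking the upper regularization of $\sup_{\lambda\in\C^*}\bigl(u(\lambda\,\boldcdot\,)-\log|\lambda|\bigr)$, whereas you observe that the scaling maps permute the whole competitor family, so the envelope $V_{\C^* E,\tilde\varphi}$ is already logarithmically homogeneous and needs only one regularization -- the same mechanism, using the same ingredients (and, like the paper's argument for upper semicontinuity, your proof uses openness of $E$, which you rightly make explicit since the proposition's statement only assumes it implicitly via the theorem).
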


\begin{proof} 
(i)
It is easy to see that the supremum of any family of logarithmically homogeneous functions defined on a complex cone is a logarithmically homogeneous function provided the family
is bounded from above at any point of the cone. Take $z\in \C^* E$ and choose
$\lambda\in \C^*$ such that $\lambda z\in E$. For any logarithmically homogeneous
function $u$ on $\C E$  dominated by $\varphi$ on $E$ we have
\begin{equation}
\label{eq:proofprop}
u(z)=u(\lambda z)-\log |\lambda|\le \varphi(\lambda z)-\log |\lambda|
\end{equation}
which implies that the family is bounded from above at $z$. Since all 
logarithmically homogeneous functions take the value $-\infty$ at the origin
the family is bounded from above at any point of the cone.
 
 Let $\psi$ denote the function on $\C^*E$ whose
  value at $z$ is given by the right hand side of the equation (\ref{eq:prop(i)}).
For a  logarithmically homogeneous function $u$ on $\C E$,
dominated by $\varphi$ on $E$, we have $u(z)\le \varphi(\lambda z)-\log |\lambda|$
for any $\lambda\in \C^*$ such that $\lambda z\in E$ by (\ref{eq:proofprop}).
Taking infimum over all $\lambda\in\C^*$ with $\lambda z\in E$ shows that 
$u\leq \psi$ on $\C^* E$. Hence $\tilde \varphi \leq \psi$ on $\C^* E$.
To prove the converse inequality note that
  \begin{align}\label{eq:prop1}
  \psi(\mu z)&=\inf \{ \varphi(\lambda\mu z)-\log |\lambda| 
			\,;\, \lambda\in \C^* \text{ and } \lambda\mu z \in E \}\\ \nonumber
							&=\inf \{ \varphi(\lambda\mu z)-\log |\lambda\mu|
			\,;\, \lambda\in \C^* \text{ and } \lambda\mu z \in E \}+\log |\mu|\\ \nonumber
						&=\psi(z)+\log |\mu|
	\end{align}		
for any $z\in\C^*E$ and $\mu\in \C^*$ 
thus the map $\psi$ is logarithmically homogeneous. 
Since $\psi\le \varphi$ on $E$ we get $\psi\le \tilde \varphi$.
  
\smallskip
(ii)  Since $\varphi\geq \tilde\varphi$ on $E$ and
$E\subset \C^* E$ we have
$V_{E,\varphi}^h\geq 
V_{E,\tilde\varphi}^h
\geq V_{\C^* E,\tilde \varphi}^h$.
For proving  the two equalities we
take $u\in \cL^h$ with $u|E\leq \varphi$.
By (i) we obtain
$u\leq \tilde \varphi$ on $\C^* E$ which implies
$V_{\C^* E,\tilde \varphi}^h\geq V_{E,\varphi}^h$. 

\smallskip
(iii)
To prove that $\tilde \varphi$ is upper semicontinuous
take $z _0\in \C^* E$ and $c>\tilde \varphi(z _0)$.
We need to show that $c>\tilde \varphi(z )$ for all 
$z $ in some neighbourhood $U$ of $z _0$.  We choose
$\lambda_0\in \C^*$ such that $\lambda_0z _0\in E$
and such that $c>\varphi(\lambda_0z _0)-\log|\lambda_0|$.  
Since $\varphi\in \USC(E)$ there
exists an open neighbourhood $U$ of $z _0$
such that $\lambda_0z\in E $ and 
$c>\varphi(\lambda_0z )-\log|\lambda_0|$ 
for all $z \in U$.  By (i) we have
$c>\tilde \varphi(z )$ for all $z \in U$. 
 
Since $\cL^h\subset \cL$ we have
$V_{\C^* E,\tilde \varphi}^h \leq V_{\C^* E,\tilde \varphi}$.
For proving the opposite inequality
we take $u\in \cL$ such that $u\leq \tilde \varphi$ on $\C^* E$.
Then $u(\lambda z )-\log|\lambda|\leq \tilde\varphi(\lambda z )-\log|\lambda|=\tilde\varphi( z )$
for all $ z \in \C^* E$ and $\lambda\in \C^*$.
Let $v$ be the upper semicontinous regularization
of the function 
$\sup\{u(\lambda\boldcdot)-\log|\lambda| \,;\, \lambda\in \C^*\}$ on $\C^n$.  
We have  $u\leq v\leq \tilde \varphi$ on $\C^* E$ and since $\C^*E$ is
nonpluripolar and $\tilde\varphi$ is locally bounded above on  $\C^*E$,
we have $v\in \cL$. 
A similar calculation as in (\ref{eq:prop1})
shows that $v$ is logarithmically homogeneous, which proves the opposite inequality.

\smallskip
(iv)
The fact that $\tilde \varphi$ is upper semicontinuous at $0$ easily follows from
the fact that $\tilde \varphi$ is bounded from above on the unit sphere and that it is
logarithmically homogeneous. By (iii) we get $V_{E,\varphi}^h=V_{\C^* E,\tilde \varphi}$ and it is easy to see that in the case $\C E=\C^n$ the latter equals
$V_{\C^n,\tilde \varphi}$.

Let $P_{\tilde \varphi}$ denote the function whose
value at $z$ is given by the right hand side of the equation.
Since $\cL\subset \PSH(\C^n)$ it follows 
$V_{\C^n,\tilde \varphi} \leq P_{\tilde\varphi}$.
 To prove the opposite inequality,
it is enough to show that $P_{\tilde \varphi} \in \cL$.
Since $\tilde \varphi\subset \USC(\C^n)$ it follows that  $P_{\tilde \varphi} $
is the largest plurisubharmonic function on $\C^n$ dominated by $\tilde\varphi$.
By upper semicontinuity the map $\tilde \varphi$ is bounded from above 
on the unit sphere in $\C^n$ by some constant $M\in \R$. 
Since $\tilde\varphi$ is logarithmically homogeneous we get
$$P_{\tilde \varphi}(\lambda z)\le \tilde\varphi(\lambda z)\leq
\log|\lambda|+M=\log|\lambda z|+M$$
for any $z\in\C^n$, $|z|=1$, and $\lambda\in\C^*$. 
It follows that $P_{\tilde \varphi} \in \cL$.
\end{proof}

\smallskip
{\it Proof of Theorem \ref{th:main}.}
By Proposition \ref{prop:2.1} 
the largest logarithmically homogeneous function 
$\tilde \varphi\colon\C E\to \R\cup\{-\infty\}$
dominated by $\varphi$ on $E$ is upper semicontinuous on $\C^* E$
and $\varrho_{E,\varphi}=e^{\tilde \varphi(z)}
=\inf\{ |\lambda|e^{\varphi(z/\lambda)} \,;\, 
\lambda\in \C^*, \ z/\lambda\in E\}$ which proves (\ref{eq:1.0}).

If we take $X=\C^* E$ and $\psi=\tilde \varphi$ in (\ref{eq:2.3}), 
then logarithmic homogeneity
of $\tilde \varphi$ on $\C^* E$ implies that 
$$
\int_\T \tilde\varphi(f_1/f_0,\dots,f_n/f_0)\, d\sigma
=\int_\T \tilde\varphi(f_1,\dots,f_n)\, d\sigma
-\int_\T \log|f_0| \, d\sigma.
$$ 
If $f_0(0)=1$, then the Riesz representation formula gives
$$
\sum_{a\in f^{-1}(H_\infty)}m_{f_0}(a) \log|a|
+
\int_\T \log|f_0| \, d\sigma=0,
$$
which implies that the right hand side of 
(\ref{eq:2.3}) reduces to
\begin{align*}
  %\label{eq:2.4}
  V_{\C^* E,\tilde\varphi}(z)=\inf\Big\{&
\int_\T \tilde\varphi(f_1,\dots,f_n) \, d\sigma \,;\, 
f\in \cO(\overline \D,\mathbb  P ^n),\\ 
& f(\T)\subset \C^*E , \ 
f_0(0)=1,\ (f_1(0),\dots,f_n(0))=z
\Big\},
\end{align*}
thus (\ref{eq:1.1}) follows from Proposition \ref{prop:2.1} (iii).

If $\C E=\C^n$ then Proposition \ref{prop:2.1} (iv)  and
Poletsky theorem \cite{Pol:1991,Pol:1993}
imply
\begin{align*}
V_{E,\varphi}^h
&=\sup \{u \in \PSH(\C^n)\,;\, u \leq \tilde\varphi\}\\
&=\inf\Big\{  \int_\T \log\varrho_{E,\varphi}\circ f \, d\sigma \,;\, 
f\in \cO(\overline \D,\mathbb  C ^n),\,f(0)=z\Big\}
\end{align*}
which proves (\ref{eq:1.2}).
\hfill$\square$

\bigskip
{\bf Observation.} \ In the special case $\varphi=0$
we write $\varrho_E$ for $\varrho_{E,\varphi}$.
The function  $\varrho_E$ is absolutely homogeneous of degree $1$,
i.e., $\varrho_E(z\zeta)=|z|\varrho_E(\zeta)$.
Thus, if $E$ is a balanced domain, i.e., $\overline\D E=E$, then 
$\varrho_E$ is its Minkowski function.
%Assume now that $E$ is a bounded pluripolar set and let
%$B_E$ be closed convex hull of 
%$\{\zeta\in \C E\,;\, \varrho_E(\zeta)\leq 1\}
%=\overline{\D E}$.     Then $B_E$ is a compact convex subset of
%$\C^n$.  
%Since $\C E$ is non-pluripolar and is contained in the
%subspace  $\C B_H$ spanned by $B_H$, 
%it follows that $\C B_H=\C^n$, for any lower dimensional subspace of
%$\C^n$ is pluripolar.  Hence $B_H$ is the
%closed unit ball for a norm $\|\boldcdot\|_E$, which is 
%given as the  Minkowski functional of $B_E$,
%$$
%\|\zeta\|_E =\inf\{|z| \,;\, z\in \C^*, \  \zeta/z \in B_E\},
%\qquad \zeta\in \C^n.
%$$
%Since $E\subset B_E$ we have $\|\zeta\|_E\leq \varrho_E(\zeta)$ for 
%all $\zeta\in \C E$ and if $\overline {\D E}$ is convex, then
%$\|\boldcdot\|_E=\varrho_E$. 

{\small
\bibliographystyle{plain}
\bibliography{bibref}
\end{document}
\smallskip\noindent
Faculty of Mathematics and Physics, University of Ljubljana, and\\
Institute of Mathematics, Physics, and Mechanics, \\
Jadranska 19, 1000 Ljubljana, SLOVENIA.\\
barbara.drinovec@fmf.uni-lj.si

\medskip\noindent
and

\medskip\noindent
Department of Mathematics, School of Engineering and Natural Sciences, 
University of Iceland,\\
IS-107 Reykjav\'ik, ICELAND. \\
ragnar@hi.is
}%\small